\newcommand{\C}{\mathbb{C}}
\newcommand{\D}{\mathbb{D}}
\newtheorem{lemma}{Lemma}[section]
\newtheorem{proposition}{Proposition}[section]
\newtheorem{theorem}{Theorem}[section]
\newtheorem{remark}{Remark}[section]
\begin{document}
\thispagestyle{plain}
\begin{center}
    \Large
    \textbf{Genericity of a result in Nevanlinna Theory}
           
    \vspace{0.4cm}
    Y. Galanos
       
    \vspace{0.9cm}
    \textbf{Abstract}
\end{center}

The Bloch - Nevanlinna conjecture states that if an analytic function on the unit disk is of bounded characteristic, then the same is true for its derivative. It has since been shown to be false on various occasions. A common approach is to construct a function whose derivative has radial limit almost nowhere along the unit circle, which is known to imply that the derivative is of unbounded characteristic. It is even possible for such a function to have a continuous extension on the unit circle (See \cite{rudin1955problem}). Here we prove generic existence of such functions in the unit disk algebra, using the Baire category theorem.

\section{Introduction and Preliminaries}

\ \ Let $\D=\{z\in\mathbb{C} : |z|<1\}$ be the open unit disk in the complex plane. A function $f:\overline{\D}\to\mathbb{C}$ belongs to the unit disk algebra (or disk algebra) $A(\D)$ if it is holomorphic on the unit disk and continuous on the closed unit disk. Endowed with the uniform norm, $A(\D)$ becomes a Banach space (a Banach algebra in particular).
The characteristic function $T(f)$ of a holomorphic function $f:\D\to\C$ is defined as
\begin{equation} T(f)(r)=\frac{1}{2\pi}\int_{0}^{2\pi}log^+|f(re^{i\theta})|d\theta,\ r\in(0,1)
\end{equation}
 where $log^+t=\max\{0,log(t)\}$ for $ t>0$ and $log^+t=0$ for $ t\leq 0$.
The function $f$ is said to be of bounded characteristic if $$\sup_{0<r<1}T(f)(r)<+\infty$$
The Nevanlinna class of the unit disk $N^+$ is defined as the class of holomorphic functions of bounded characteristic on the unit disk. It is immediate that all bounded functions are of bounded characteristic, thus we have $A(\D)\subseteq N^+$.\\\\
In the present paper we show that the generic function $f$ in $A(\D)$ satisfies $$\sup_{0<r<1}\int_{A}^{B}log^+|f'(re^{i\theta})|d\theta=+\infty,\ \text{for all } A,B\in\mathbb{R}\ \text{with } A<B$$

\section{Preparation}

We begin with two useful inequalities:
\begin{lemma}
\begin{enumerate}
\item There exists a constant $c\in\mathbb{R}$ such that $$log^+(s+t)\leq log^+s + log^+t + c ,\ s,t\geq0$$
\item $log^+(st)\leq log^+s + log^+t ,\ s,t\in\mathbb{R} $
\end{enumerate}
\end{lemma}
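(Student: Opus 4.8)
The plan is to derive both inequalities from three elementary facts about $\log^+$: that it is nondecreasing on $[0,\infty)$, that $\log^+ x = \max\{0,\log x\}$ for $x>0$, and consequently that $\log x \le \log^+ x$ for every $x>0$ while $\log^+ x \ge 0$ always. Throughout I would take $s,t\ge 0$, which is the range in which the inequalities are applied in the sequel (the arguments being moduli $|f(z)|$); I note in passing that (2) genuinely fails if one allows negative arguments, e.g. $s=t=-2$ gives $\log^+(st)=\log 4>0=\log^+ s+\log^+ t$, so the nonnegative restriction is the natural reading.

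For (2) I would avoid any case split by observing that $\log^+(st)$ is the maximum of the two numbers $0$ and $\log(st)$, and bounding each of them by $\log^+ s + \log^+ t$. Indeed $0\le \log^+ s+\log^+ t$ trivially, while for $s,t>0$ additivity of the logarithm gives $\log(st)=\log s+\log t\le \log^+ s+\log^+ t$; the boundary cases $s=0$ or $t=0$ force $st=0$ and $\log^+(st)=0$, so the bound is immediate. Since both candidates in the maximum are dominated by the right-hand side, so is their maximum.

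For (1) the idea is to replace the sum by a factor-free upper bound and then peel off a constant. Writing $M=\max\{s,t\}$, I have $s+t\le 2M$, so monotonicity yields $\log^+(s+t)\le \log^+(2M)$. The crux is the scaling estimate $\log^+(2M)\le \log 2+\log^+ M$, which I would verify directly: for $M>0$ one has $\log^+(2M)=\max\{0,\log 2+\log M\}$ while $\log 2+\log^+ M=\max\{\log 2,\log 2+\log M\}$, and the former is no larger since $0\le \log 2$; the case $M=0$ is trivial. Finally $\log^+ M=\max\{\log^+ s,\log^+ t\}\le \log^+ s+\log^+ t$ because $\log^+\ge 0$, so the choice $c=\log 2$ works.

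Neither part presents a real obstacle; the only points requiring care are the sub-unit regime where $\log^+$ vanishes (so that one must keep the $\max$ with $0$ visible rather than writing $\log(s+t)$ outright) and the bookkeeping of the additive constant in (1). An alternative route to (1) would be to combine (2) with $s+t\le (1+s)(1+t)$ and the estimate $\log^+(1+s)\le \log 2+\log^+ s$, which yields the admissible but weaker constant $c=2\log 2$; I would prefer the direct argument above for the sharper value $c=\log 2$.
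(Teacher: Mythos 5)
Your proof is correct, but for part (1) it takes a genuinely different and more efficient route than the paper's own proof. The paper proves (1) by splitting the region $s+t>2$ into three cases ($s\le 1<t$, $t\le 1<s$, and $s,t>1$), bounding $\log\frac{t}{s+t}$ resp.\ $\log\frac{st}{s+t}$ from below along the line segments $s+t=k$, and then invoking compactness of $\{(s,t):\ s,t\ge 0,\ s+t\le 2\}$ to handle the remaining region --- an argument that establishes the existence of $c$ without exhibiting a value. Your argument via $s+t\le 2\max\{s,t\}$, the scaling estimate $\log^+(2M)\le \log 2+\log^+ M$, and $\log^+\max\{s,t\}\le \log^+ s+\log^+ t$ is shorter, avoids compactness entirely, and produces the explicit constant $c=\log 2$; it is essentially the argument the paper itself records only afterwards, in Remark 2.1, as a suggestion of V.~Nestoridis improving on the proof. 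Your treatment of (2), dominating both branches of the maximum $\max\{0,\log(st)\}$ separately, matches the paper's two-case proof ($st<1$ versus $st\ge 1$) in substance. Finally, your observation that (2) is false as literally stated for all $s,t\in\mathbb{R}$ (e.g.\ $s=t=-2$ gives $\log^+(st)=\log 4>0=\log^+s+\log^+t$) is a genuine catch: the paper's own proof writes $\log(st)=\log s+\log t$, which silently assumes $s,t>0$, and since the lemma is only ever applied to moduli $|g'|$, $|\lambda|$, your restriction to $s,t\ge 0$ is the correct reading of the statement.
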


\begin{proof}
\begin{enumerate}
\item First we show the inequality for $s+t>2$. Consider the following cases:\\
\begin{enumerate}
\item $s\leq1< t$\\\\ Observe that $log^+s=0,\ log^+t=log(t).\ s+t\geq t>1\Rightarrow log^+(s+t)=log(s+t).$ Hence, we want to find a lower bound for the quantity $log(t)-log(s+t)=log{t\over s+t}$. Notice that the region on which we are restricted is covered by line segments of the form $s+t=k,\ 0\leq s\leq 1$, where $k>2$ is constant for each line segment. Choose $k>2$. For $(s,t)$ satisfying $s+t=k$ we get $${t\over s+t}={k-s\over k}=1-{1\over k}>{1\over 2}$$ The last bound is independent of the constant $k$, hence the inequality holds in the whole region.
\item The case $t\leq1<s$ is similar
\item Now let $s,t>1.\\ log^+s=log(s),\ log^+t=log(t),\ s+t>1\Rightarrow log^+(s+t)=log(s+t)$. As before, we want to find a lower bound for the quantity  $log{st\over s+t}$. Let $(s,t)$ be on the line segment $s+t=k$, where $k>2$ is a constant. $${st\over s+t}={t(k-t)\over k},\ 1<t<k-1$$
It is easy to check that $t(k-t)\geq k-1$ for $1<t<k-1$, thus we obtain $${t(k-t)\over k}\geq{k-1\over k}>{1\over 2}$$
Again, the family of these line segments covers the whole region, so the inequality holds there.\\\\
The remaining part, namely $\{(s,t):\ s,t\geq0,\ s+t\leq 2\}$, is a compact subset of the plane. Hence, the continuous function $log^+s+log^+t-log^+(s+t)$ obtains a minimum value on that set.
\end{enumerate}
\item If $st<1$ then $log^+(st)=0$ and the inequality trivially holds.\\
If $st\geq 1$ then $log^+(st)=log(st)=log(s) + log(t)\leq log^+s + log^+t.$
\end{enumerate}
\end{proof}

\begin{remark}
V. Nestoridis suggested that a good value for the constant $c$ in the previous Lemma is $c=log2$. Indeed, if $s+t<1$ we have $log^+(s+t)=0\leq log^+s + log^+t + log2$ and if $s+t\geq1$ we have $log^+(s+t)=log(s+t)\leq log(2\max\{s,t\})=log2 +log\max\{s,t\}\leq log^+s + log^+t + log2$
\end{remark}

We now mention a result of M. Siskaki \cite{siskaki2018boundedness} and prove a variation of it that will be used in the next section:

\begin{theorem}{(Siskaki)}
Let V be a topological vector space, X a non empty set and $\C^{X}$ the set of all complex valued functions defined on X. Let $T:V\to\C^{X}$ be a linear function such that for every $x\in X,\ V\ni f\mapsto T(f)(x)$ is a continuous mapping. If the set $\{f\in V : T(f) \text{ is unbounded}\}$ is non empty then it is $G_{\delta}$ dense in V
\end{theorem}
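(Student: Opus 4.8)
The plan is to prove the two assertions separately: that $U=\{f\in V:\ T(f)\ \text{is unbounded}\}$ is a $G_\delta$ set, and that it is dense. The first rests only on the pointwise continuity hypothesis, while the second is where linearity and the nonemptiness assumption enter.

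For the $G_\delta$ part I would first rewrite unboundedness in a countable fashion. Writing $\phi_x(f)=T(f)(x)$ for the (by hypothesis continuous) evaluation maps, a function $T(f)$ is unbounded precisely when for every $n\in\mathbb{N}$ there is some $x\in X$ with $|\phi_x(f)|>n$. Hence
\[
U=\bigcap_{n=1}^{\infty}A_n,\qquad A_n=\bigcup_{x\in X}\phi_x^{-1}\big(\{z\in\C:|z|>n\}\big).
\]
Each set $\phi_x^{-1}\big(\{z\in\C:|z|>n\}\big)$ is open because $\phi_x$ is continuous and $\{|z|>n\}$ is open, so $A_n$ is open as a union of open sets and $U=\bigcap_n A_n$ is $G_\delta$. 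This step is entirely routine.

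The density is the substantive point. I would fix once and for all a witness $f_0\in U$, which exists by hypothesis, so $T(f_0)$ is unbounded. The key observation I would isolate is a rigidity statement along affine lines: for any $g\in V$, the scalar set $\{t:\ T(g+tf_0)\ \text{is bounded}\}$ contains \emph{at most one} point. Indeed, if $t_1\neq t_2$ both gave bounded $T(g+t_1f_0)$ and $T(g+t_2f_0)$, then by linearity their difference equals $(t_1-t_2)T(f_0)$, which would be bounded, forcing $T(f_0)$ itself to be bounded and contradicting the choice of $f_0$. Consequently $g+tf_0\in U$ for all but at most one scalar $t$. To conclude density I would then take an arbitrary $g\in V$ and neighborhood $W$ of $g$, use that the map $t\mapsto g+tf_0$ is continuous (addition and scalar multiplication being continuous in a topological vector space) and sends $0$ to $g\in W$, so that its preimage of $W$ is a nonempty open subset of the scalar field; removing the single exceptional scalar still leaves an admissible $t$ with $g+tf_0\in W\cap U$.

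The main obstacle is really just this density argument, and within it the point requiring care is that the conclusion ``all but at most one scalar works'' is only useful because the scalar field is infinite, so that admissible $t$ survive in any neighborhood of $0$; this is automatic over $\mathbb{R}$ or $\C$. I note that, somewhat surprisingly, the density does not itself require the Baire category theorem — it is obtained directly from linearity — so that Baire will enter only afterwards, when this $G_\delta$ density is converted into a genericity (residuality) statement in the complete space $A(\D)$.
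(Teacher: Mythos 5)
Your proof is correct and follows essentially the same route as the paper: the identical countable decomposition $\bigcap_{n}\bigcup_{x\in X}\phi_x^{-1}\left(\{z:|z|>n\}\right)$ for the $G_\delta$ part, and for density the same perturbation $g+tf_0$ along the unbounded witness, where your ``at most one scalar gives boundedness'' rigidity is precisely the paper's observation, stated contrapositively via $f=N(g+\tfrac{1}{N}f)-Ng$, that two points of that affine line lying in $S$ would force $f_0\in S$. The only differences are cosmetic: you argue density directly while the paper argues by contradiction along the sequence $t=\tfrac{1}{n}$, and the paper actually proves the slightly more general variation assuming only that $S$ is a linear subspace rather than that $T$ is linear (your rigidity step adapts verbatim to that setting).
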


\begin{theorem}{(Variation)}
Let V be a topological vector space, X a non empty set and $T:V\to\C^{X}$ a function satisfying the following properties:
\begin{enumerate}
\item For every $x\in X,\ V\ni f\mapsto T(f)(x)$ is a continuous mapping
\item $S=\{f\in V :\ T(f) \text{ is bounded}\}$ is a linear subspace of V
\end{enumerate}
If the set $V\backslash S=\{f\in V : T(f) \text{ is unbounded}\}$ is non empty then it is $G_{\delta}$ dense in V
\end{theorem}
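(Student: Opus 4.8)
The plan is to verify the two constituents of ``$G_\delta$ dense'' separately, exploiting the fact that the two hypotheses play disjoint roles: continuity in condition (1) will yield that $V\setminus S$ is $G_\delta$, while the subspace structure in condition (2), together with the nonemptiness assumption, will yield density. Notably, no completeness or Baire hypothesis on $V$ is needed for the conclusion as stated, and---unlike in the original formulation---the linearity of $T$ itself is never used, only that the set of ``bounded'' points forms a subspace.

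For the $G_\delta$ part, for each $n\in\mathbb{N}$ I would introduce the set $A_n=\{f\in V:\ |T(f)(x)|\leq n\ \text{for all}\ x\in X\}$. Fixing $x$, the map $f\mapsto|T(f)(x)|$ is continuous, being the composition of the continuous map from (1) with the modulus, so $\{f\in V:\ |T(f)(x)|\leq n\}$ is closed as the preimage of a closed set. Hence $A_n=\bigcap_{x\in X}\{f:\ |T(f)(x)|\leq n\}$ is closed, being an arbitrary intersection of closed sets. Since $T(f)$ is bounded precisely when $\sup_{x\in X}|T(f)(x)|$ is finite, i.e.\ bounded by some integer, one has $S=\bigcup_{n=1}^{\infty}A_n$; thus $S$ is $F_\sigma$ and $V\setminus S=\bigcap_{n=1}^{\infty}(V\setminus A_n)$ is $G_\delta$.

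For density, the assumption $V\setminus S\neq\emptyset$ says exactly that $S$ is a \emph{proper} subspace of $V$. The key observation I would invoke is that a proper linear subspace of a topological vector space has empty interior: if $S$ contained some open set, picking a point $x_0$ in it and translating by $-x_0$ (which keeps us inside $S$, as $S$ is a subspace) produces an open neighborhood $W$ of $0$ with $W\subseteq S$; since every neighborhood of $0$ in a topological vector space is absorbing, each $v\in V$ satisfies $\lambda v\in W\subseteq S$ for some small scalar $\lambda\neq 0$, whence $v=\lambda^{-1}(\lambda v)\in S$ and $S=V$, contradicting properness. Consequently $\operatorname{int}(S)=\emptyset$, which is equivalent to $\overline{V\setminus S}=V$, i.e.\ $V\setminus S$ is dense.

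I expect the density step to be the only substantive point, and within it the crux is the topological-vector-space fact that a proper subspace has empty interior; the $G_\delta$ computation is routine once the decomposition $S=\bigcup_n A_n$ into closed sets is set up. Combining the two steps shows $V\setminus S$ is a dense $G_\delta$, as claimed.
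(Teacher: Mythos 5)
Your proposal is correct and takes essentially the same route as the paper: your $F_\sigma$ decomposition $S=\bigcup_n A_n$ into closed sets is exactly the De Morgan dual of the paper's expression $V\setminus S=\bigcap_{n}\bigcup_{x\in X}\{f\in V:\ |T(f)(x)|>n\}$ as a countable intersection of open sets, and your density step (a proper subspace of a topological vector space has empty interior, via translating an open subset of $S$ to an absorbing neighborhood of $0$) is the same scaling argument the paper runs inline by observing that $g+\frac{1}{n}f\to g$ forces $g+\frac{1}{N}f\in U\subseteq S$ and hence $f\in S$. The only difference is packaging: your version isolates the density claim as a general linear-topological fact about $S$ that makes no reference to $T$, which the paper's direct contradiction argument leaves implicit.
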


\begin{proof}
$$V\backslash S=\{f\in V : T(f) \text{ is unbounded}\}=\bigcap\limits_{n\in\mathbb{N}}\bigcup\limits_{x\in X}\{f\in V:\ |T(f)(x)|> n\}$$ We can therefore conclude that $V\backslash S$ is $G_{\delta}$, since the set $\{f\in V:\ |T(f)(x)|> n\}$ is the inverse image of the open interval $(n,+\infty)$  through the continuous function $|T(\cdot)(x)|$, hence open.\\
If $V\backslash S\neq\emptyset$, there exists some element $f$ in that set. It suffices to show that $V\backslash S$ intersects every non empty open subset $U$ of $V$.
Suppose that there exists an open $U$ such that $(V\backslash S)\cap U=\emptyset$. Take $g\in U$. Since $V$ is a topological vector space, the sequence $g+{1\over n}f$ converges to $g.\ U$ is open, so $g+ {1\over N} f\in U$, for large enough $N$. Now observe that $f=N(g+ {1\over{N}} f)- Ng\in S$. This holds because $U\subseteq S$ and $S$ is a linear subspace of $V$. We have arrived at a contradiction, and the proof is complete.
\end{proof}

\section{Set up and proof of the result}
Fix some $f\in A(\D)$ such that $f'\notin N^+$.

\begin{proposition}
There exists some $\theta_0\in\mathbb{R}$ such that $\forall A,B\in\mathbb{R},\ A<\theta_0<B$ $$\sup_{0<r<1}\int_{A}^{B}log^+|f'(re^{i\theta})|d\theta=+\infty$$
\end{proposition}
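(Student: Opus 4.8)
The plan is to argue by contradiction using the compactness of the unit circle, exploiting that $\log^+|f'|$ is nonnegative. Recall first that the hypothesis $f'\notin N^+$ means precisely that $\sup_{0<r<1}\int_0^{2\pi}\log^+|f'(re^{i\theta})|\,d\theta=+\infty$, since the factor $1/(2\pi)$ in the characteristic is irrelevant for boundedness. Also note that $\theta\mapsto\log^+|f'(re^{i\theta})|$ is $2\pi$-periodic and, for each fixed $r\in(0,1)$, continuous (as $f'$ is holomorphic on $\D$, hence continuous on the circle $|z|=r$), so every integral over a bounded interval is finite for each such $r$; only the behaviour of the supremum over $r$ is at issue.

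Suppose, toward a contradiction, that no $\theta_0$ with the stated property exists. Negating the statement, for every $\theta_0\in\mathbb{R}$ there are reals $A<\theta_0<B$ with $\sup_{0<r<1}\int_A^B\log^+|f'(re^{i\theta})|\,d\theta<+\infty$; call such an arc a \emph{bounded arc} around $\theta_0$. By $2\pi$-periodicity it suffices to let $\theta_0$ range over one period, which I identify with the compact circle $\mathbb{T}=\mathbb{R}/2\pi\mathbb{Z}$. The chosen bounded arcs then form an open cover of $\mathbb{T}$, so by compactness I can extract a finite subcover $I_1,\dots,I_k$, where each $I_j=(A_j,B_j)$ satisfies $M_j:=\sup_{0<r<1}\int_{A_j}^{B_j}\log^+|f'(re^{i\theta})|\,d\theta<+\infty$.

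Next I would use nonnegativity to dominate the full integral. Since the $I_j$ cover $\mathbb{T}$, we have the pointwise bound $\mathbf{1}_{[0,2\pi]}\le\sum_{j=1}^k\mathbf{1}_{I_j}$ on the circle; multiplying by $\log^+|f'(re^{i\theta})|\ge0$ and integrating gives, for every fixed $r\in(0,1)$,
\[
\int_0^{2\pi}\log^+|f'(re^{i\theta})|\,d\theta\le\sum_{j=1}^k\int_{A_j}^{B_j}\log^+|f'(re^{i\theta})|\,d\theta\le\sum_{j=1}^k M_j.
\]
Taking the supremum over $r$ yields $\sup_{0<r<1}\int_0^{2\pi}\log^+|f'(re^{i\theta})|\,d\theta\le\sum_{j=1}^k M_j<+\infty$, which contradicts $f'\notin N^+$. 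This contradiction produces the desired $\theta_0$.

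The routine part is the compactness extraction and the elementary domination inequality; the step needing the most care is the bookkeeping around periodicity. Arcs around points near $0$ or $2\pi$ may cross the identification boundary, so one must phrase both the cover and the domination inequality on $\mathbb{T}$ (or equivalently split such an arc into two subintervals using the periodicity of the integrand) so that the finite family genuinely dominates the integral over a full period. I expect this periodic bookkeeping, rather than any analytic estimate, to be the only real obstacle.
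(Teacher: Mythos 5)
Your proposal is correct and follows essentially the same argument as the paper: negate the statement, extract a finite subcover of bounded arcs by compactness, and use nonnegativity of $\log^+|f'|$ to dominate the full integral over a period, contradicting $f'\notin N^+$. The only cosmetic difference is that you work on $\mathbb{T}=\mathbb{R}/2\pi\mathbb{Z}$ (hence your periodicity bookkeeping), whereas the paper simply covers the compact interval $[0,2\pi]\subset\mathbb{R}$ by open intervals of $\mathbb{R}$, which avoids that issue entirely.
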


\begin{proof}
Suppose there exists no such $\theta_0$. This means that for every $\theta\in[0,2\pi]$ there exists an interval $(A_{\theta}, B_{\theta})$ properly containing $\theta$ such that $$\sup_{0<r<1}\int_{A_{\theta}}^{B_{\theta}}log^+|f'(re^{i\theta})|d\theta<+\infty$$ $\{(A_{\theta}, B_{\theta})\}_{\theta\in[0,2\pi]}$ is an open cover of $[0,2\pi]$, hence there exist $\theta_1,...\theta_k\in[0,2\pi]$ such that the corresponding intervals cover $[0,2\pi]$. Notice that 
$$\int_{0}^{2\pi}log^+|f'(re^{i\theta})|d\theta\leq\sum\limits_{i=1}^{k}\int_{A_{\theta_i}}^{B_{\theta_i}}log^+|f'(re^{i\theta})|d\theta,\ \forall r\in(0,1)$$
This implies that $$\sup_{0<r<1}\sum\limits_{i=1}^{k}\int_{A_{\theta_i}}^{B_{\theta_i}}log^+|f'(re^{i\theta})|d\theta=+\infty$$ All the integrals in the previous finite sum are non-negative, and so we get $$\sum\limits_{i=1}^{k}\sup_{0<r<1}\int_{A_{\theta_i}}^{B_{\theta_i}}log^+|f'(re^{i\theta})|d\theta=+\infty$$ which is a contradiction, since each summand is finite.
\end{proof}

\begin{proposition}
Let $A,B\in\mathbb{R},\ A<B$. There exists some $g\in A(\D)$ such that $$\sup_{0<r<1}\int_{A}^{B}log^+|g'(re^{i\theta})|d\theta=+\infty$$
\end{proposition}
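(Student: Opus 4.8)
The plan is to reduce the statement to the previous Proposition by rotating the fixed function $f$ so that its distinguished direction $\theta_0$ is carried into the prescribed interval $(A,B)$. Concretely, for a real parameter $\alpha$ I would set $g(z)=f(e^{i\alpha}z)$. Since $z\mapsto e^{i\alpha}z$ maps $\overline{\D}$ onto itself, $g$ is holomorphic on $\D$ and continuous on $\overline{\D}$, so $g\in A(\D)$; this is the only membership check required.

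Next I would compute the derivative and rewrite the target integral. By the chain rule $g'(z)=e^{i\alpha}f'(e^{i\alpha}z)$, so $|g'(re^{i\theta})|=|f'(re^{i(\theta+\alpha)})|$ for every $r\in(0,1)$ and $\theta\in\mathbb{R}$. The substitution $\phi=\theta+\alpha$ then gives
\begin{equation*}
\int_{A}^{B}log^+|g'(re^{i\theta})|\,d\theta=\int_{A+\alpha}^{B+\alpha}log^+|f'(re^{i\phi})|\,d\phi,\qquad r\in(0,1),
\end{equation*}
so that the supremum over $r$ of the left-hand side is exactly the quantity that Proposition~3.1 controls for the shifted interval $(A+\alpha,B+\alpha)$.

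It then remains to choose $\alpha$ so that this shifted interval contains $\theta_0$. The requirement $A+\alpha<\theta_0<B+\alpha$ is equivalent to $\alpha\in(\theta_0-B,\theta_0-A)$, a non-empty open interval since $A<B$. Fixing any such $\alpha$ and applying Proposition~3.1 yields $\sup_{0<r<1}\int_{A+\alpha}^{B+\alpha}log^+|f'(re^{i\phi})|\,d\phi=+\infty$, which by the displayed identity is precisely the desired conclusion for $g$.

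I do not expect a genuine obstacle here: the whole argument is a transport of the single bad direction furnished by Proposition~3.1 to an arbitrary prescribed location. The only points deserving a moment's care are that the rotation keeps $g$ inside $A(\D)$ and that the change of variables is valid, the latter being immediate because for each fixed $r$ the integrand is continuous in $\theta$. Periodicity of $\phi\mapsto log^+|f'(re^{i\phi})|$ offers an alternative way to relocate $\theta_0$, but it is unnecessary once $\alpha$ is taken in the interval above.
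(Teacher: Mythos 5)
Your proof is correct and is essentially identical to the paper's: both rotate the fixed function, $g(z)=f(e^{i\alpha}z)$, compute $|g'(re^{i\theta})|=|f'(re^{i(\theta+\alpha)})|$, change variables in the integral, and choose the rotation so that the shifted interval $(A+\alpha,B+\alpha)$ contains the $\theta_0$ from Proposition 3.1. No differences worth noting.
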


\begin{proof}
Set $g(z)=f(ze^{iw}),\ z\in\overline{\D}$, where $w$ satisfies $A+w<\theta_0<B+w$. It is obvious that $g\in A(\D)$ and $|g'(z)|=|f'(ze^{iw})|,\ \forall z\in\D$.\\  
Now, for arbitrary $r\in(0,1)$ we have
 $$\int_{A}^{B}log^+|g'(re^{i\theta})|d\theta=\int_{A}^{B}log^+|f'(re^{i(\theta+w)})|d\theta=\int_{A+w}^{B+w}log^+|f'(re^{it})|dt$$ from which the proposition easily follows.
\end{proof}

Now fix $A,B,\ A<B$ and set $$T_{A,B}(g)(r)=\int_{A}^{B}log^+|g'(re^{i\theta})|d\theta,\ \forall g\in A(\D),\ r\in(0,1)$$
We are going to verify the conditions of Theorem 2.2 for $(0,1),\ A(\D)$ and $T_{A,B}$ in place of $X,\ V$ and $T$ respectively:

\begin{enumerate}
\item For all $r\in(0,1),\ T_{A,B}(\cdot)(r)$ is continuous:\\\\
Let $g\in A(\D)$ and $(g_n)$ be a sequence in $A(\D)$ such that $g_n\to g$ uniformly on $\overline{\D}$. Weierstrass' theorem implies $g'_n\to g'$ uniformly on the compact arc $\Gamma=\{re^{i\theta}:\ A<\theta<B\}$.\ Therefore, the sequence $(g'_n)$ is uniformly bounded on $\Gamma$, since each of these functions is bounded on $\Gamma$. Observe that $log^+|\cdot|$ is a continuous function, thus $log^+|g'_n|\to log^+|g'|$ uniformly on $\Gamma$. Integrating over $[A,B]$ yields $ T_{A,B}(g_n)(r)\rightarrow  T_{A,B}(g)(r)$.

\item $S=\{f\in A(\D) :\ T_{A,B}(f) \text{ is bounded}\}$ is a linear subspace of $A(\D)$:\\\\
Let $g,h\in S, \lambda\in\C$.\ Choose an arbitrary $r\in(0,1)$. Apply the first inequality of Lemma 2.1 for the non-negative quantities $|g'(re^{i\theta})|,\ |\lambda h'(re^{i\theta})|$, then the second one for $|\lambda|,\ |h'(re^{i\theta})|$. Since $log^+$ is non-decreasing, applying the triangle inequality and then integrating over $[A,B]$ gives
\begin{equation} T_{A,B}(g+\lambda h)(r)\leq T_{A,B}(g)(r) + T_{A,B}(h)(r) +(B-A)(log^+|\lambda| + c) \end{equation}
Since $r\in(0,1)$ was arbitrary and $g,h\in S$, (2) implies that $g+\lambda h\in S$.
\end{enumerate}

\begin{remark} For $A=0, B=2\pi$, the above shows in an elementary way that $N^+$ is a complex vector space. This is a well known fact, but the standard proof uses more advances techniques.
\end{remark}
\begin{remark}
Having verified the conditions of Theorem 2.2 as mentioned above, we now observe that $A(\D)\backslash S\neq\emptyset$. This is an immediate consequence of Proposition 3.2. Theorem 2.2 then implies that $A(\D)\backslash S$ is $G_{\delta}$ dense in $A(\D)$.
\end{remark}
So far $A<B$ have been fixed. We now prove the main, more general result:

\begin{theorem}
The set $$R=\{f\in A(\D):\ T_{A,B}(f) \text{ is unbounded,}\ \forall A,B\in\mathbb{R},\ A<B\}$$ is $G_{\delta}$ dense in $A(\D)$. In particular, $R\neq\emptyset$ and the generic function $f$ in $A(\D)$ satisfies $$\sup_{0<r<1}\int_{A}^{B}log^+|f'(re^{i\theta})|d\theta=+\infty,\ \text{for all } A,B\in\mathbb{R}\ \text{with } A<B$$
\end{theorem}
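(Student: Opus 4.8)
The plan is to write the set $R$ as a \emph{countable} intersection of sets already known to be $G_{\delta}$ dense, and then invoke the Baire category theorem. The immediate difficulty is that, by its very definition, $R$ is the intersection over the \emph{uncountable} family of all pairs $A<B$ of the sets $A(\D)\setminus S_{A,B}$, where $S_{A,B}=\{g\in A(\D):\ T_{A,B}(g)\ \text{is bounded}\}$. An intersection of that many dense $G_{\delta}$ sets need not be dense, so the crux of the argument is to reduce to a countable subfamily.

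First I would record the elementary monotonicity observation that drives the reduction: if $A\le A'<B'\le B$, then since the integrand $\log^+|g'(re^{i\theta})|$ is non-negative,
\[
T_{A,B}(g)(r)=\int_{A}^{B}\log^+|g'(re^{i\theta})|\,d\theta\ \ge\ \int_{A'}^{B'}\log^+|g'(re^{i\theta})|\,d\theta=T_{A',B'}(g)(r)
\]
for every $r\in(0,1)$. Hence $\sup_{0<r<1}T_{A,B}(g)(r)\ge\sup_{0<r<1}T_{A',B'}(g)(r)$, so unboundedness of $T_{A',B'}(g)$ on a subinterval $[A',B']\subseteq[A,B]$ forces unboundedness of $T_{A,B}(g)$.

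Using this, I would establish the key claim that $f\in R$ precisely when $T_{A',B'}(f)$ is unbounded for all \emph{rational} $A'<B'$. One direction is trivial; for the other, given arbitrary reals $A<B$, density of $\mathbb{Q}$ supplies rationals with $A<A'<B'<B$, and the monotonicity observation then upgrades unboundedness on $[A',B']$ to unboundedness on $[A,B]$. This identifies
\[
R=\bigcap_{\substack{A',B'\in\mathbb{Q}\\ A'<B'}}\bigl(A(\D)\setminus S_{A',B'}\bigr),
\]
a countable intersection, since the index set is a subset of $\mathbb{Q}^{2}$.

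Finally I would conclude. By Remark 3.4 each $A(\D)\setminus S_{A',B'}$ is $G_{\delta}$ dense in $A(\D)$, and since $A(\D)$ is a Banach space, hence a complete metric space, the Baire category theorem guarantees that a countable intersection of dense $G_{\delta}$ sets is again dense $G_{\delta}$. Therefore $R$ is $G_{\delta}$ dense; in particular it is nonempty, and every $f\in R$ witnesses the asserted genericity. I expect the only genuine subtlety to lie in the countable reduction of the middle step, the rest being a routine application of Baire's theorem.
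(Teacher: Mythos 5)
Your proposal is correct and takes essentially the same route as the paper: both reduce $R$ to the countable intersection over rational pairs $A'<B'$ using non-negativity of the integrand (so unboundedness on a rational subinterval forces unboundedness on the containing real interval), and then apply the Baire category theorem to the dense $G_{\delta}$ sets $A(\D)\setminus S_{A',B'}$. The only cosmetic difference is that you assert the equality $R=Q$ outright, while the paper proves $Q\subseteq R$ and notes the reverse inclusion is obvious.
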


\begin{proof}
First observe that $Q=\{f\in A(\D):\ T_{A,B}(f) \text{ is unbounded,}\ \forall A,B\in\mathbb{Q},\ A<B\}=\bigcap\limits_{A\in\mathbb{Q}}\bigcap\limits_{\substack {B\in\mathbb{Q} \\ B>A}}\{f\in A(\D):\ T_{A,B}(f) \text{ is unbounded}\}$ is $G_{\delta}$ dense in $A(\D)$. This follows from the Baire category theorem, since each of the sets being intersected is $G_{\delta}$ dense in $A(\D)$, as noticed in Remark 3.2.\\
To complete the proof, it suffices to show that $Q\subseteq R$, since the converse inclusion is obvious. Let $g\in Q$ and $A,B\in\mathbb{R},\ A<B$. Take $A',B'\in\mathbb{Q}$ such that $A<A'<B'<B$. Then $$\int_{A}^{B}log^+|g'(re^{i\theta})|d\theta\geq\int_{A'}^{B'}log^+|g'(re^{i\theta})|d\theta,\ \forall r\in(0,1)$$ which implies that $g\in R$
\end{proof}

\textbf{Acknowledgement}\ I would like to thank V.Nestoridis for suggesting the problem and providing his guidance concerning this article. I would also like to thank A. Siskakis for his interest to this work.

\bibliographystyle{ieeetr}
\bibliography{paper}

\begin{thebibliography}{1}

\bibitem{rudin1955problem}
W.~Rudin, ``On a problem of bloch and nevanlinna,'' {\em Proceedings of the
  American Mathematical Society}, vol.~6, no.~2, pp.~202--204, 1955.

\bibitem{siskaki2018boundedness}
M.~Siskaki, ``Boundedness of derivatives and anti-derivatives of holomorphic
  functions as a rare phenomenon,'' {\em Journal of Mathematical Analysis and
  Applications}, vol.~462, no.~2, pp.~1073--1086, 2018.

\end{thebibliography}

\end{document}